\documentclass[psamsfonts]{amsart}
\usepackage{blindtext}
\usepackage{graphicx}

\usepackage[utf8]{inputenc}
\usepackage{amssymb,amsfonts,amsmath}
\usepackage[all,arc]{xy}
\usepackage{enumerate}
\usepackage{mathrsfs}
\usepackage[letterpaper, portrait, margin=1.4in]{geometry}
\usepackage{amsthm}
\theoremstyle{plain} 
\newtheorem{thm}{Theorem}[section]

\newtheorem{prop}[thm]{Proposition}

\theoremstyle{definition}

\newtheorem{defn}[thm]{Definition}

\newtheorem{con}[thm]{Construction}
\newtheorem{exmp}[thm]{Example}
\newtheorem{nexmp}[thm]{Non-Example}

\newtheorem{notn}[thm]{Notation}

\newtheorem{rem}[thm]{Remark}
\theoremstyle{remark}

\title{Persistence and Sheaves}
\author{Karthik Yegnesh}

\begin{document}

\maketitle
\begin{abstract}
In this note, we descibe a mild generalization of Carlsson and Zomorodian's \textit{persistent homology} of filtered topological spaces, namely \textit{persistent sheaf cohomology}. Given a sheaf of abelian groups on a filtered topological space, we obtain a global description of the sheaf cohomology classes present across the space by studying the persistence of the cohomology classes. As an application, we study the persistent cellular sheaf cohomology of network coding sheaves developed by Ghrist and Hiraoka in [2]. The persistence of network coding sheaf cohomology classes across a filtered digraph (which represents network deterioration phenomena) provides insight into the stability of certain information flows across the network.
\end{abstract}

\section{Introduction}
Let $X$ be a topological space and $\phi:X\rightarrow\mathbb{R}$ a continuous real-valued function. Traditional persistent homology theory seeks to obtain a global description of the homological properties of $X$ via examining the singular homology of subspaces of $X$ induced by $\phi$. That is, persistent homology studies the ``persistence" of the homology groups of each subspace of the filtered space $\phi^{-1}(-\infty,i_0]=X_{0}\hookrightarrow\phi^{-1}(-\infty,i_1]=X_{1}\hookrightarrow\ldots\hookrightarrow\phi^{-1}(-\infty,i_n]=X_{n}=X$, where $i_j<i_k$ for $j<k$ and $n<\infty$. The persistence of homology classes in the filtration yields useful information regarding their significance in the global picture of $X$. In this paper, we study the data of an abelian sheaf on a filtered topological space via studying the persistence of the sheaf cohomology functor restricted to sub-spaces in the filtration. As an application, we study the persistent cellular sheaf cohomology of a network coding sheaves developed in [2]. The persistence of network coding sheaf cohomology classes provide insight into the stability of certain information flows across the network.

\section{Background}
We will recall some basic definitions regarding persistent homology and (co)sheaf (co)homology. We will assume some background with basic algebraic topology, including singular homology and some category theory. For more background, the read is encouraged to look at [1] and [3]. 

\subsection{Persistent Homology}
Persistent homology is a tool which one uses to study the birth and death of topological features in a filtered space. 
\begin{defn}
Let $X_{\bullet}$ be a filtered topological space, i.e a space $X$ equipped with a sequence of nested subspaces $X_{0}\subset X_{1}\subset\ldots\subset X_{n}=X$. Let $H_n(X)$ denote the $n^{th}$ singular homology (with coefficients in a field $k$) vector space of $X$. Fix indices $i$ and $j$ with $i\leq j$. The $(i,j)$-persistent $n^{th}$ homology vector space $H^{i,j}_{n}(X_{\bullet})$ is defined as $H^{i,j}_{n}(X_{\bullet})=\mathrm{im}(H_n(X_i)\rightarrow H_n(X_j))$, where $H_n(X_i)\rightarrow H_n(X_j)$ is induced by the inclusion $X_i\subset X_j$.
\end{defn}
\begin{rem}
The dimension of the $k$-vector space $H^{i,j}_{n}(X_{\bullet})$ is the number of $n$-dimensional holes present in subspace $X_{i}$ that are also present in $X_{j}$.
\end{rem}
\begin{exmp}
If $i=j$, then it is clear that $H^{i,j}_{n}(X_{\bullet})=H_{n}(X_{i})$ since the $k$-linear map induced by $\mathrm{id}:X_{i}\rightarrow X_{i}$ must be the identity map on $H_{n}(X_{i})$.
\end{exmp}

\subsection{Sheaves and Cech cohomology}
We recall some relevant facts about sheaves and cohomology. In this paper, we will use both sheaves and cosheaves, but the information this subsection is easily dualized for the case of cosheaves. An excellent survey of cosheaf theory can be found in Justin Curry's thesis [3].\\

Basically, a sheaf is an association of ``data" (what exactly that means depends on the situation) to open sets of a topological space that is compatible with inclusions $U\hookrightarrow V$ of open sets of the space. Formally:
\begin{defn}
Let $X$ be a topological space and $C$ an abelian category (the reader can safely imagine $C$ to be $\mathrm{Ab}$, the category of abelian groups). A \textit{$C$-valued presheaf} $F$ on $X$ is a contravariant functor $F\colon \mathrm{Open}(X)^{op} \to C$ from the category of open subsets of $X$ to $J$. If $U\subset X$, an element $x\in F(U)$ is a \textit{section} of $F$ over $U$. For a pair of embedded open subsets $V \subset U \subset X$, the induced map on the inclusion $F(U)\to F(V)$ is called the \textit{restriction map.} A presheaf $F$ on $X$ is a \textit{sheaf} if for any open $U \subset X$ and any open cover $\{U_i\}$ of $U$, the sequence $0\rightarrow F(U)\rightarrow\bigoplus_iF(U_i)\rightarrow\bigoplus_iF(\bigcap_iU_i)$ is exact. Note that if $F$ is a sheaf, then $F(\emptyset)=0$, where $0$ denotes the zero object of $C$ (e.g the trivial abelian group, the trivial $k$-vector space, etc.).
\end{defn}

\begin{exmp}
The presheaf which associates to each open set $U\subset X$ its singular $n^{th}$ cohomology $H^{n}(U;G)$ with coefficients in some abelian group $G$ is a sheaf. The sheaf condition in this case is satisfied because the functor $H^{n}$ satisfies the Mayer-Vietoris property.
\end{exmp}
\begin{nexmp}
The constant presheaf $F:\mathrm{Open}(X)\rightarrow\mathrm{Ab}$ which sends each open set to the abelian group $\mathbb{Z}$ and each morphism $U\rightarrow V$ to $\mathrm{id}_{\mathbb{Z}}$ is not a sheaf.
\end{nexmp}
\begin{defn}
Let $X$ be a topological space and $\mathscr{U}=\{U_i\}$ an open cover of $X$, and $F$ a presheaf of abelian groups on $X$. The group of \textit{\v{C}ech $k$-chains} associated to $\mathscr{U}$ is the group $C_k(\mathscr{U},F)= \bigoplus_iF(U_{0,1,\ldots ,k})$, where $U_{0,1,\ldots,k}=\bigcap_{i=0}^kU_i$.

Equipped with differentials $\partial^k \colon C_k(\mathscr{U};F)\to C_{k+1}(\mathscr{U};F)$, we obtain a \textit{\v{C}ech complex} $C_{\bullet}(\mathscr{U};F)= 0 \to C_0(\mathscr{U};F)\to C_{1}(\mathscr{U};F)\to \ldots \to C_k(\mathscr{U};F)\to \ldots$. We denote the $k^{th}$ \v{C}ech cohomology group associated to $F$ and covering $\mathscr{U}$ by $\check{H}^n(\mathscr{U};F)=\mathrm{ker}(\partial^{k})/\mathrm{im}(\partial^{k-1})$.
\end{defn}
\begin{rem}
\v{C}ech cohomology in general does not coincide with sheaf cohomology (defined via derived functors), but for our purposes and eventual applications, \v{C}ech cohomology suffices.
\end{rem}

\subsection{Cellular Sheaves}
In order to make (co)sheaf (co)homology computable in practical scenarios, one often restrict attention to sheaves over \textit{cell complexes} which are valued in the category of vector spaces over a (usually finite) field. In this paper, we will develop out techniques in the generality of arbritrary topological spaces. However, our main application of studying the persistence of network coding sheaf cohomology classes will involve cellular language.
\begin{defn}
Let $X$ be a cell complex (see [3]). Let $\mathrm{Cell}(X)$ denote the poset of cells in $X$, regarded as a category in which there exists a single arrow $x\rightarrow y$ if and only if $x$ is a face of $y$. A \textit{cellular sheaf} $F$ on $X$ is a functor $F:\mathrm{Cell}(X)\rightarrow\mathrm{Vect}_{k}$, where $\mathrm{Vect}_{k}$ denotes the category of vector spaces over the field $k$. 
\end{defn}
\begin{defn}
Given a cellular sheaf $F$ on $X$, one can define \textit{cellular sheaf cohomology}. It is a computationally feasible adaptation of the Cech/Sheaf cohomology of sheaves on general topological spaces or Grothendieck sites. $\mathrm{Cell}(X)_{k}$ denote the set of $k$-dimensional cells of $X$. Write $x\leq y$ if $x$ is a face of $y$. For $\lambda$ and $\theta$ two cells in $X$, denote by $[\lambda,\theta]$ the signed incidence relation [3, Definition 6.1.9]. Let $C(X;F)_k=\bigoplus_{c\in\mathrm{Cell}(X)_{k}}F(c).$ Define the differential $\partial^k:C(X;F)_k\rightarrow C(X;F)_{k+1}$ by $\partial^k(c)=\sum_{\theta\leq c}[\theta:c]\alpha_{c,\theta}$ for $c\in C(X;F)_k$ and $\alpha_{c,\theta}$ being the restriction. Since $\partial^{2}=0$, define the $k^{th}$ cellular sheaf cohomolgy $k$-vector space $H^{k}_{c}(X;F)=\mathrm{ker}(\partial^{k})/\mathrm{im}(\partial^{k-1})$.
\end{defn}
Cellular sheaf cohomology (particularly in the contex of network coding sheaves [1]) will be used as an application of our tools.

\section{Persistent Sheaf Cohomology}
We can adjust the definition of persistent homology slightly to obtain \textit{persistent sheaf cohomology}. We will place the restriction that the spaces in consideration are cell complexes, so we are dealing with cellular sheaf cohomology.
\begin{notn}
Let $X$ be a finite topological space. Denote by $X_{\bullet}$ a filtration $X_{0}\subset X_{1}\subset\ldots\subset X_{n}=X$. We will assume that the $X_i$ are indexed over an ordered set. Each $X_{i}$ is endowed with the subspace topology induced by the inclusion $i:X_{i}\hookrightarrow X$. Let $F:X^{op}\rightarrow\mathrm{Vect}_{k}$ be a $\mathrm{Vect}_{k}$-valued sheaf over $X$ (which restricts to a sheaf over all the $X_i$). Fix a covering $\mathscr{U}$ on $X$. This restricts to coverings $\mathscr{U}_{i}$ on each of the $X_i$. Let $C_k(\mathscr{U},F)= \bigoplus_iF(U_{0,1,\ldots ,k})$ (see Definition 2.4). Let $\Omega^{i}$ denote the inclusion $\Omega^{i}:X_i\hookrightarrow X_{i+1}$.
\end{notn}
We obtain the following commutative diagram, called the \textit{sheaf persistence complex}. 
\begin{con}
\begin{displaymath}
\xymatrix{ 
\vdots \ar[d] & \vdots \ar[d]  & \vdots \ar[d]     \\
0\to \ldots \to \mathrm{C}_{k-1}^k(\mathscr{U}_{i-1};F) \ar[d]^{\Omega^{i-1}} \ar[r]^{\hspace{10mm}\partial_{k-1}^{i-1}}& \mathrm{C}_{k}^k(\mathscr{U}_{i-1};F)  \ar[d]^{\Omega^{i-1}} \ar[r]^{\hspace{-10mm}\partial_{k}^{i-1}}  & \mathrm{C}_{k+1}^k(\mathscr{U}_{i-1};F)\to \ldots \to 0  \ar[d]^{\Omega^{i-1}} \\
0\to \ldots \to \mathrm{C}_{k-1}^k(\mathscr{U}_i;F) \ar[d]^{\Omega^{i}} \ar[r]^{\hspace{10mm}\partial_{k-1}^{i}}& \mathrm{C}_{k}^k(\mathscr{U}_i;F) \ar[d]^{\Omega^{i}}\ar[r]^{\hspace{-10mm}\partial_{k}^{i}} & \mathrm{C}_{k+1}^k(\mathscr{U}_i;F)\to\ldots \to 0 \ar[d]^{\Omega^{i}} \\
0\to \ldots \to \mathrm{C}_{k-1}^k(\mathscr{U}_{i+1};F)  \ar[d] \ar[r]^{\hspace{10mm}\partial_{k-1}^{i+1}}& \mathrm{C}_{k}^k(\mathscr{U}_{i+1};F) \ar[d]\ar[r]^{\hspace{-10mm}\partial_{k}^{i+1}} & \mathrm{C}_{k+1}^k(\mathscr{U}_{i+1};F)\to \ldots \to 0 \ar[d] \\
\vdots  & \vdots  & \vdots  \\ 
 }
 \end{displaymath}
\end{con}
The cohomology vector spaces of this complex will be used in the definition of \textit{co-persistent sheaf cohomology}, which we now provide. 
\begin{defn}
Let $H^k(X_i;F)$ denote the $k^{th}$ sheaf cohomology vector space of $F$ restricted to $X_i$, i.e $H^k(X;F)=\mathrm{ker}(\partial^{i}_{k})/\mathrm{im}(\partial^{i}_{k-1})$ as dictated by the above diagram. Let $X_{\bullet}$ be a filtered topological space and $F$ a cellular cosheaf on it. The $k^{th}$ $(i,j)$ co-persistent sheaf cohomology vector space $H^k_{i,j}(X;F)$ is defined as $\mathrm{im}(H^k(X_j;F)\rightarrow H^k(X_i;F))$, where $H^k(X_j;F)\rightarrow H^k(X_i;F)$ is induced by the inclusion $X_i\hookrightarrow X_j$.
\end{defn}
Notice that instead of defining the persistent cosheaf homology, we choose to define co-persistent sheaf cohomology. The vector space $H^k_{i,j}$ encodes the Cech cohomology classes of the restriction $F|_{X_{j}}$ that are not destroyed when passing to the subspace $X_i$. The analogue of this in the singular homology world would be studying which homology classes are present in a subspace that are not disrupted when passing to a smaller subspace.

\begin{rem}
Let $\widetilde{F}$ be the (sheafification of) the constant $k$-valued presheaf $F:\mathrm{Open}(X)\rightarrow\mathrm{Vect}_{k}$. Then there is an isomorphism of $k$-vector spaces $H^k_{i,j}(X;F)\simeq\mathrm{im}(H^{k}(X_{j},k)\rightarrow H^{k}(X_{i},k))$. In other words, we can obtain ``persistent singular cohomology" as a special case of persistent sheaf cohomology in the same way that singular cohomology is a special case of sheaf cohomology.
\end{rem}

\begin{rem}
This is very easily dualizable to obtain \textit{persistent cosheaf homology}. 
\end{rem}

In the context of this paper, co-persistent cosheaf cohomology enables us to study which NC sheaf cohomology classes persist through a deteriorating network. We will switch to ``persistent cellular sheaf cohomology," but this is defined in the exact same way as the more general case. 


\subsection{Sheaf Cohomological Persistence Modules and Diagrams}
The theory of quiver representations and persistence diagrams plays a large role in the development of persistent homology in the sense of Carlsson and Zomorodian. We will describe a similar scenario on the context of persistent (co)sheaf homology. We recall a definition first.
\begin{defn}
A \textit{persistence module} of length $n$ is a sequence of vector spaces $V_i$ over a field $k$ indexed over $\{i\in\mathbb{N}|i\leq n\}$ equipped with $k$ linear maps $\varphi_i:V_i\rightarrow V_{i+1}$. Equivalently, this is a functor from the small category $\bullet\rightarrow\ldots\rightarrow\bullet$ (with $n$ objects) to $\mathrm{Vect}_{k}$.
\end{defn}
It is a classical result that every persistence module admits a unique decomposition into direct sums of so called \textit{interval modules}. An interval module is a persistence module of the form $0\rightarrow\ldots\rightarrow k\rightarrow\ldots\rightarrow k\rightarrow0$. 

In persistent homology, one naturally obtains a persistence module by applying the (singular) homology functor $H_{n}:\mathrm{TopSpaces}\rightarrow\mathrm{Vect}_{k}$ to a filtered topological space $X_{\bullet}=X_{0}\hookrightarrow X_{1}\hookrightarrow\ldots$. The lengths of the intervals in the canonical interval decomposition (which represent the lifetimes of homology classes) are recorded in a multiset called a \textit{persistence diagram}.

\begin{defn}
Let $X_{\bullet}$ be a filtered topological space.
Let $\mathbb{Z}_{\infty}$ denote the set $\mathbb{Z}\cup\{\infty\}$. A degree $n$ persistence diagram is roughly a multiset over $\mathbb{Z}_{\infty}\times\mathbb{Z}_{\infty}$, where the multiplicity $\mu(i,j)$ of a point $(i,j)\in\mathbb{Z}_{\infty}$ is $\mathrm{dim}(H_{n}^{i,j}(X_{\bullet}))$. The points parametrize the lifetimes of homology classes in $X_{\bullet}$. The points of a persistence diagram therefore correspond to "intervals" in the interval decomposition of the persistence module obtain via the homology of $X_{\bullet}$. The multiplicity function records the number of intervals of a particular length and position exist.
\end{defn}

The notion of a persistence diagram thus can be generalized to any situation in which there is a suitable interpretation of a persistence module. 

\begin{con}
Let $F$ be a sheaf of vector spaces on filtered topological space $X_{\bullet}=X_{0}\hookrightarrow X_{1}\hookrightarrow\ldots$. Let $\mathscr{U}$ be a covering of $X$ and denote by $\mathscr{U}_{i}$ the restriction of $\mathscr{U}$ to $X_{i}$. By applying the Cech cohomology functor $H^{k}(\mathscr{U}_{-};F)$ to each $X_{i}$, we obtain a persistence module $H^{k}(\mathscr{U}_{0};F)\rightarrow H^{k}(\mathscr{U}_{1};F)\rightarrow\ldots$. This admits a direct-sum decomposition into interval modules. Let $\mathrm{Int}_{k}(i,j)$ (for $i\leq j\leq\infty$) denote the set of interval modules of length $j-i$ such that the first non-trivial vector space in each of the interval modules is at position $i$ and last at position $j$. 
\end{con}
\begin{rem}
The interval modules in the last construction represent the lifetimes of sheaf cohomology classes in the $X_{i}$ as one passes to increasingly smaller subspaces of $X$. The long intervals describe sheaf cohomology classes which ``persist" through the ``deteriorating space," while the short intervals indicate that certain classes do not.
\end{rem}
\begin{rem}
If instead we use the cosheaf homology functor where the cosheaf is the constant functor taking values in the field $k$, then the persistence module obtained is precisely the one obtained by taking persistent homology with coefficients in $k$.
\end{rem}
We now define \textit{degree $n$ sheaf persistence diagrams} based on the persistence modules associated to a sheaf on a filtered space described previously.

\begin{con}
Fix a sheaf $F:X_{\bullet}\rightarrow\mathrm{Vect}_{k}$. For each $n\in\mathbb{N}$, construct a multiset $\mathrm{Dgm}_{n}(X_{\bullet};F)$ over $\mathbb{Z}_{\infty}\times\mathbb{Z}_{\infty}$ called the degree $n$ sheaf persistence diagram associated to $F$ as follows. If $\mathrm{Int}_{n}(i,j)\neq\emptyset$, add a point $(i,j)\in\mathrm{Dgm}_{n}(X_{\bullet};F)$. The multiplicity function $\mu:\mathrm{Dgm}_{n}(X_{\bullet};F)\rightarrow\mathbb{N}$ is given by $\mu(i,j)=|\mathrm{Int}_{n}(i,j)|$.
\end{con}

The multiset $\mathrm{Dgm}_{n}(X_{\bullet};F)$ provides a global description of the lifetimes of sheaf cohomology classes across the filtered space $X_{i}$. For example, clustering of points along the diagonal indicates an instability of sheaf cohomology data when passing to smaller subspaces as dictated by the nature of the filtration. 
\section{Application to Information Flow in Deteriorating Networks}
We will now present an application of our tools to studying information flow across unstable networks. We first recall the basics of network coding sheaves found in [2].
\subsection{Network Coding Sheaves}
Let $F$ be a network viewed as a directed graph. Denote by $V(G)$ and $E(G)$ its sets of nodes and edges, respectively. Assume that there exist sets $S,T\subset V(G)$ called \textit{sources} and \textit{targets}. Define a function $\mathrm{cap}:E(G)\rightarrow\mathbb{N}$ which assigns to each edge $e$ in $F$, its capacity $\mathrm{cap}(e)\in\mathbb{N}$. Let $\mathrm{Vect}_{k}$ denote the category of finite vector spaces and $k$-linear maps for some Galois field $k$. We can construct a cellular sheaf $F:G\rightarrow\mathrm{Vect}_{k}$ called a \textit{network coding sheaf} according to the definition below.
\begin{defn}
A \textit{network coding sheaf} $F:G\rightarrow\mathrm{Vect}_{k}$ is a cellular sheaf constructed as follows. To each edge $e\in E(G)$, $F(e)=k^{\mathrm{cap}(e)}$. For a node $v$, denote by $\mathrm{In}(v)$ the set of directed edges that are directed towards $v$. For a node $v$, $F(v)=\bigoplus_{e\in\mathrm{In}(v)}\mathrm{cap}(e)$. The restriction maps are given by the canonical projections.
\end{defn}
The main result of [2] is the following.
\begin{thm}
Let $F$ be a NC sheaf on directed graph $F$. Then the $0^{th}$ cellular sheaf cohomology vector space $H^{0}(G;F)$ is equivalent to the total information flows on $F$.
\end{thm}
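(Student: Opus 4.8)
The plan is to read $H^{0}(G;F)$ off the cellular cochain complex of Definition 2.9 and to match the resulting vector space, on the nose, with the space of admissible flows defined in [2]. Since a directed graph $G$ is a $1$-dimensional cell complex, the complex $C(G;F)_{\bullet}$ lives only in degrees $0$ and $1$: $C(G;F)_{0}=\bigoplus_{v\in V(G)}F(v)\xrightarrow{\ \partial^{0}\ }C(G;F)_{1}=\bigoplus_{e\in E(G)}F(e)$, and there is no $\partial^{-1}$. Hence the first (and essentially free) step is $H^{0}(G;F)=\ker(\partial^{0})=\Gamma(G;F)$, the space of global sections of the network coding sheaf.

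The second step is to unwind $\ker(\partial^{0})$ explicitly. A $0$-cochain is a tuple $x=(x_{v})_{v\in V(G)}$ with $x_{v}\in F(v)=\bigoplus_{e\in\mathrm{In}(v)}k^{\mathrm{cap}(e)}$; write $x_{v}=(x_{v}^{e})_{e\in\mathrm{In}(v)}$, and for an edge $e$ let $t(e),h(e)$ be its tail and head. By the definition of $\partial^{0}$ and of $F$, the $e$-component of $\partial^{0}x$ is the signed sum $[t(e):e]\,\alpha_{e,t(e)}(x_{t(e)})+[h(e):e]\,\alpha_{e,h(e)}(x_{h(e)})$ in $F(e)=k^{\mathrm{cap}(e)}$, where $\alpha_{e,h(e)}$ is the canonical projection onto the $e$-summand of $F(h(e))$ and $\alpha_{e,t(e)}$ is the restriction map out of the tail (the local coding map at $t(e)$). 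Since one incidence sign is $+1$ and the other $-1$, the equation $\partial^{0}x=0$ says exactly that, for every edge $e$, the symbol $x_{h(e)}^{e}$ carried at the head equals the value prescribed by the restriction map out of the tail.

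The third step is to recognize this as the flow axioms. Associate to a section $x$ the edge-data $y_{e}:=x_{h(e)}^{e}\in k^{\mathrm{cap}(e)}$; as $\dim_{k}k^{\mathrm{cap}(e)}=\mathrm{cap}(e)$, the assignment $e\mapsto y_{e}$ automatically respects the channel capacities. The constraints of Step 2 say precisely that at each node the outgoing messages are the prescribed linear functions of the incoming messages $(y_{e})_{e\in\mathrm{In}(v)}=x_{v}$, i.e.\ that $y$ satisfies the conservation/coding law at every vertex, with the source nodes supplying freely chosen inputs and the target nodes recording what is received. Conversely, any admissible $y$ reassembles into a section via $x_{v}:=(y_{e})_{e\in\mathrm{In}(v)}$. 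The maps $x\mapsto y$ and $y\mapsto x$ are mutually inverse and $k$-linear, so $H^{0}(G;F)\cong\{\text{total information flows on }G\}$, which is the claim.

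The mathematical content here is light; the main obstacle is bookkeeping. One must (i) fix the incidence signs $[\,\cdot:\cdot\,]$ and the restriction maps $\alpha_{e,v}$ — in particular the map out of the tail of an edge, which the definition of $F$ leaves implicit — so that the identification is a literal equality of subspaces of $\bigoplus_{e}k^{\mathrm{cap}(e)}$ rather than merely an abstract isomorphism; (ii) handle the source and target vertices separately, since their stalks and restriction maps are configured so that a source imposes no incoming constraint and a target only reads off the received data; and (iii) quote the definition of ``total information flow'' from [2] and verify that it is this $\ker(\partial^{0})$, and not some quotient or subquotient of it. Once these conventions are pinned down, the theorem follows immediately from Steps 1--3.
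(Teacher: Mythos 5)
The paper itself contains no proof of this statement: Theorem 4.2 is simply quoted as the main result of [2] (Ghrist--Hiraoka), so there is no internal argument to compare yours against. Your outline does follow the route one would expect, and essentially the route of [2]: on a $1$-dimensional cell complex the cochain complex stops at degree $1$, so $H^{0}(G;F)=\ker\partial^{0}=\Gamma(G;F)$, and one then identifies global sections with admissible flows edge by edge.

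The genuine gap is that the items you set aside as ``bookkeeping'' in (i)--(iii) are where all of the mathematical content sits, and they cannot be filled in from this paper's definitions. Definition 4.1 gives $F(v)=\bigoplus_{e\in\mathrm{In}(v)}k^{\mathrm{cap}(e)}$ and says the restriction maps are ``the canonical projections''; this only defines $F(v)\to F(e)$ when $e\in\mathrm{In}(v)$, i.e.\ out of the \emph{head} of $e$. The map out of the tail --- the local coding function, which is the entire point of network coding --- is not part of the data as written, so the condition $\partial^{0}x=0$ computed literally from the paper's definition does not express coding consistency at all (the tail contributes nothing, and each head component is just annihilated or left unconstrained depending on sign conventions). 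Your Step 2 silently imports these coding maps from [2]. Likewise ``total information flows'' is never defined in this paper; in [2] it is defined relative to prescribed coding functions and, crucially, to an augmentation of the network by decoding edges from receivers back to the sources, and it is only for that augmented complex that $H^{0}$ matches the flow space --- a point your parenthetical treatment of sources and targets does not address. So Steps 1--3 give the correct skeleton, but what you defer is not a matter of pinning down conventions: it \emph{is} the proof. As it stands, the proposal establishes $H^{0}(G;F)=\Gamma(G;F)$ and leaves the identification of $\Gamma(G;F)$ with information flows to the very reference being proved.
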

\begin{con}\label{fil}
Let $F$ be a finite directed graph (regarded as a 1-dimensional cell complex) with the structure necessary to construct a NC sheaf $F:G\rightarrow \mathrm{Vect}_{k}$. Let $\mathrm{st}:E(G)\rightarrow\mathbb{R}$ be a function on the edge set of $F$ assigning to each edge $e$ in $F$ its \textit{strength} $\mathrm{st}(e)\in\mathbb{R}_{\geq 0}$. We may constrain the domain to the positive reals, but this is not entirely necessary. Denote by $F_i$ the subgraph of $F$ consisting of all nodes in addition to edges $e$ such that $\mathrm{st}(e)\leq e$. Call positive real $r$ \textit{critical} if $F_r\neq G_{r-\epsilon}$ for some $\epsilon>0$, i.e the subcomplex $F_i$ is strictly larger that $F_{r-\epsilon}$. Denote by $\{c_1, c_2,\ldots\}$ the set of critical values in increasing order. We have a filtration $G_{\bullet}^{\mathrm{st}}=G_{c_{1}}\subset G_{c_{2}}\subset\ldots\subset G_{c_{\infty}}$. 
\end{con}
\begin{rem}
The filtration $G_{\bullet}^{\mathrm{st}}$ is solely indicative of network link strength. The smallest subcomplexes contain the edges with the weakest strength functions.
\end{rem}
\begin{notn}
Fox $X$ a finite directed graph with a strength function $f$, $X_{\bullet}^{f}$ will denote the filtered cell complex in the sense of Construction \ref{fil}.
\end{notn}
\begin{prop}\label{coh}
The co-persistent NC sheaf cohomology vector spaces $H_{i,j}^{0}(X_{\bullet};F)$ are equivalent to the information flows on network $F$ which survive with the removal of edges $E(X_j\backslash X_i)$.
\end{prop}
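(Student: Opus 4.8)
The plan is to reduce the statement to the main result of~[2] recalled above, together with a direct analysis of the map on zeroth cohomology induced by the inclusion $X_i \hookrightarrow X_j$. By the definition of co-persistent sheaf cohomology, $H^0_{i,j}(X_\bullet;F) = \mathrm{im}\big(H^0(X_j;F)\to H^0(X_i;F)\big)$, with the map induced by $X_i\hookrightarrow X_j$, so it suffices to identify the source, the target, and this map in network-theoretic terms.

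First I would make the cochain picture explicit. The filtration of Construction~\ref{fil} retains every node of $G$ and only enlarges the edge set, so $\mathrm{Cell}(X_i)_0 = \mathrm{Cell}(X_j)_0$ while $\mathrm{Cell}(X_i)_1 = E(X_i)\subseteq E(X_j) = \mathrm{Cell}(X_j)_1$. Hence $C(X_i;F)_0 = C(X_j;F)_0$, and the chain map $C(X_j;F)_\bullet\to C(X_i;F)_\bullet$ realizing restriction to the subcomplex is the identity in degree $0$ and the coordinate projection deleting the summands $\bigoplus_{e\in E(X_j\setminus X_i)}F(e)$ in degree $1$. A $0$-cochain $x=(x_v)_v$ lies in $H^0(X_j;F)=\mathrm{ker}(\partial^0_j)$ exactly when it satisfies the sheaf-compatibility relation (via the restriction maps $\alpha_{e,-}$) at every edge $e\in E(X_j)$, and lies in $H^0(X_i;F)=\mathrm{ker}(\partial^0_i)$ exactly when it satisfies those relations for $e\in E(X_i)$ only. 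Consequently $H^0(X_j;F)\to H^0(X_i;F)$ is \emph{injective}, and its image is precisely the subspace of sections over $X_i$ that additionally satisfy the compatibility relations imposed by the deleted edges $E(X_j\setminus X_i)$.

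Next I would invoke the main result of~[2] to translate this into flows: it identifies $H^0(X_j;F)$ with the total information flows on the subnetwork $X_j$ and $H^0(X_i;F)$ with those on $X_i$. The remaining point — and the one needing the most care — is naturality: one must check that, under these identifications, the restriction map above corresponds to forgetting a flow's values on the edges $E(X_j\setminus X_i)$ and retaining the resulting flow on $X_i$, i.e. that the square
\[
\xymatrix{
H^0(X_j;F) \ar[r]^-{\sim} \ar[d] & \{\text{flows on } X_j\} \ar[d] \\
H^0(X_i;F) \ar[r]^-{\sim} & \{\text{flows on } X_i\}
}
\]
commutes. This follows by tracing the isomorphism constructed in~[2], which is itself assembled from the per-node and per-edge conservation conditions defining a flow, so that restricting cells and restricting flows agree. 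Granting this, the left vertical image is carried isomorphically onto the right vertical image, namely the flows on $X_j$ that remain flows once $E(X_j\setminus X_i)$ is removed — equivalently, by the injectivity above, the flows on $X_i$ that lift to $X_j$ — which is exactly the assertion of the proposition. The main obstacle is therefore bookkeeping rather than conceptual: pinning down the isomorphism of~[2] precisely enough to verify the square, and keeping the incidence-sign conventions consistent between $X_i$ and $X_j$.
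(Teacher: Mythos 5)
Your proposal is correct and follows essentially the same route as the paper's own proof: both reduce the statement to the identification of $H^{0}$ with information flows via the main theorem of [2] and then read off the image of the map $H^{0}(X_j;F)\rightarrow H^{0}(X_i;F)$ induced by $X_i\hookrightarrow X_j$, which is by definition $H^{0}_{i,j}(X_{\bullet};F)$. Your additional cochain-level bookkeeping (equality of the degree-$0$ cochain groups, injectivity of the induced map, and the naturality square for the identification from [2]) just makes explicit the steps the paper leaves implicit.
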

\begin{proof}
By [2, Theorem 8], the vector spaces $H^0(X_i)$ and $H^0(X_j)$ are equivalent to the information flows across sub-networks $F_i$ and $F_j$, respectively. The image of the map on vector spaces $H^k(X_j;F)\rightarrow H^k(X_i;F)$ induced by the inclusion $F_{i}\hookrightarrow X_j$ is generated by precisely the NC sheaf cohomology classes that are present in $F_j$ but are also present in the sub-network $F_i$. This is the definition of $H_{i,j}^{0}(X_{\bullet};F)$, so the proof is completed.
\end{proof}

One can also gain useful information from the degree 0 persistence diagram $\mathrm{Dgm}_{0}(X_{\bullet};F)$ associated to the NC sheaf $F$ and the filtered network $F_{\bullet}$. 

\begin{con}
Applying the degree $0$ NC sheaf cohomology functor $H^{0}(-;F)$ to the filtered network $X_{\bullet}$, we obtain a persistence module (and therefore a persistence diagram). The decomposition into interval modules indicates which information flows on the network persist through the network's deterioration based on their length. The longest intervals survive through the most edge deterioration and vice versa.
\end{con}
\begin{rem}
The co-persistence of degree $0$ NC sheaf cohomology classes is also indicative of the robustness of certain information flow channels in the network. Indeed, more robust channels will permit more stable information flows across them, which is shown through the length of intervals in the interval decomposition of $H^{0}(X_{\bullet})$.
\end{rem}





\begin{thebibliography}{9}



\bibitem{May}
Curry, Justin.
 Sheaves, Cosheaves and Applications.
Available at arxiv.org/abs/1303.3255. 

\bibitem{notsoshort}
R. Ghrist and Y. Hiraoka.
Applications of sheaf cohomology and exact sequences on network codings. Proc. NOLTA, 2011. Available at math.upenn.edu/~ghrist/preprints/networkcodingshort.pdf.

\bibitem{dsf}
A. Zomorodian and G. Carlsson.
Computing Persistent Homology.
Discrete and Computational Geometry (2005) 33: 249. 

\end{thebibliography}
\end{document}